\newcommand{\cT}{\mathcal{T}}
\newcommand{\eps}{\varepsilon}
\newcommand{\friendly}{friendly}
\newcommand{\Friendly}{Friendly}
\newcommand{\mcm}[3]{\newcommand{#1}[#2]{{\ensuremath{#3}}}} 
\mcm{\tuple}{1}{\langle #1 \rangle}
\mcm{\name}{1}{\ulcorner #1 \urcorner}
\mcm{\Nbb}{0}{\mathbb{N}}
\mcm{\Zbb}{0}{\mathbb{Z}}
\mcm{\Rbb}{0}{\mathbb{R}}
\mcm{\Cbb}{0}{\mathbb{C}}
\mcm{\Qbb}{0}{\mathbb{Q}}
\mcm{\Acal}{0}{\cal A}
\mcm{\Bcal}{0}{\cal B}
\mcm{\Ccal}{0}{\cal C}
\mcm{\Dcal}{0}{\cal D}
\mcm{\Ecal}{0}{\mathcal E}
\mcm{\Fcal}{0}{\cal F}
\mcm{\Gcal}{0}{\cal G}
\mcm{\Hcal}{0}{\cal H}
\mcm{\Ical}{0}{\cal I}
\mcm{\Jcal}{0}{\cal J}
\mcm{\Kcal}{0}{\cal K}
\mcm{\Lcal}{0}{\cal L}
\mcm{\Mcal}{0}{\cal M}
\mcm{\Ncal}{0}{\cal N}
\mcm{\Ocal}{0}{{\cal O}}
\mcm{\Pcal}{0}{{\cal P}}
\mcm{\Qcal}{0}{{\cal Q}}
\mcm{\Rcal}{0}{{\cal R}}
\mcm{\Scal}{0}{{\cal S}}
\mcm{\Tcal}{0}{{\mathcal T}}
\mcm{\Ucal}{0}{{\cal U}}
\mcm{\Vcal}{0}{{\mathcal V}}
\mcm{\Wcal}{0}{{\cal W}}
\mcm{\Xcal}{0}{{\cal X}}
\mcm{\Ycal}{0}{{\cal Y}}
\mcm{\Zcal}{0}{{\cal Z}}
\mcm{\Mfrak}{0}{\mathfrak M}
\mcm{\restric}{0}{\upharpoonright}
\mcm{\upset}{0}{\uparrow}
\mcm{\onto}{0}{\twoheadrightarrow}
\mcm{\smallNbb}{0}{{\small \mathbb{N}}}
\DeclareMathOperator{\preop}{op}
\mcm{\op}{0}{^{\preop}}
\newcommand{\se}{\subseteq}
\newcommand{\theoremize}[2]{\newaliascnt{#1}{thm} \newtheorem{#1}[#1]{#2} \aliascntresetthe{#1}}
\theoremstyle{plain}
\newtheorem{thm}{Theorem}[section]
\theoremstyle{definition}
\newtheorem*{maindfn}{Definition}
\theoremstyle{plain}
\mcm{\Fbb}{0}{\mathbb{F}}
\newcommand{\sm}{\setminus}
\theoremstyle{plain}
\newtheorem{main}{Theorem}
\colorlet{darkishRed}{red!80!black}
\colorlet{darkishBlue}{blue!60!black}
\colorlet{darkishGreen}{green!60!black}
\renewcommand{\le}{\leqslant}
\renewcommand{\ge}{\geqslant}
\begin{document}
\title{\vspace*{-2.5cm}Entanglements}
\author[Johannes Carmesin]{Johannes Carmesin${}^\clubsuit$}
\address{University of Birmingham, Birmingham, UK}
\email{j.carmesin@bham.ac.uk, j.kurkofka@bham.ac.uk}
\author[Jan Kurkofka]{Jan Kurkofka${}^\clubsuit$}
\thanks{${}^\clubsuit$University of Birmingham, Birmingham, UK, funded by EPSRC, grant number EP/T016221/1}
\keywords{entanglement, tree of tangles, nested set of separations, efficiently distinguish, canonical}
\subjclass[2020]{05C83, 05C40, 05C05}
\begin{abstract}
\noindent 
Robertson and Seymour constructed for every graph $G$ a tree-decomposition that efficiently distinguishes all the tangles in~$G$. 
While all previous constructions of these decompositions are either iterative in nature or not canonical, we give an explicit one-step construction that is canonical.

The key ingredient is an axiomatisation of \lq local properties\rq\ of tangles.
Generalisations to locally finite graphs and matroids are also discussed.
\end{abstract}
\maketitle

\vspace{-.7cm}
\section{Introduction}

In this paper we propose an axiomatisation of \lq local properties\rq\ of tangles and apply it to give explicit one-step constructions of tree-decompositions, as follows.

Roughly speaking, \emph{tree-decompositions} are a recipe how to cut up a graph along separations in a tree-like way. \emph{Tangles} are a way to axiomatise highly cohesive substructures in graphs such as complete subgraphs or grid minors. 
We say that a separation $\{A,B\}$ of a graph $G$ \emph{distinguishes} a pair of tangles  if the two tangles live on opposite sides of $\{A,B\}$; it does so \emph{efficiently} if the separator of $\{A,B\}$ has smallest size amongst all distinguishing separations of $G$.
We say that a tree-decomposition of a graph $G$ (\emph{efficiently}) \emph{distinguishes} a pair of tangles  if there is a separation  $\{A,B\}$ which (efficiently) distinguishes the two tangles and $\{A,B\}$ is in the recipe for the tree-decomposition.
A key tool~\cite{GMX} in the proof of the graph-minor theorem states:
\begin{equation}\label{OriginalToT}
\textit{Every finite graph }G\textit{ has a tree-decomposition that efficiently distinguishes all the tangles in }G.
\end{equation}
A fair amount of the recent work on graph-minors has focused on constructing such tree-decompositions~\cite{CarmesinToTshort,CDHH13CanonicalAlg,CDHH13CanonicalParts,confing,CG14:isolatingblocks,DiestelBook16noEE,ProfilesNew,InfiniteSplinters,jacobs2023efficiently}.
In all proofs in the literature these tree-decompositions are constructed through an iterative process in which separations are chosen in turn based on previous choices. 
Here we will give a new construction of the tree-decomposition of (\ref{OriginalToT}) that finishes in one step, is canonical, and that is \emph{explicit} in the sense that it computes a single simple parameter for separations and then takes all separations for the tree-decomposition which minimise this parameter. 

In the proof of (\ref{OriginalToT}), one has to construct separations that disinguish all pairs of tangles efficiently, and one has to construct them in a \emph{nested} way; that is, so that they define the recipe of a tree-decomposition. 
Rather than working with tangles in the first place, our perspective is to directly axiomatise separations which distinguish tangles efficiently through a new notion of \emph{entanglements}; see \autoref{sec:Entangle}. 
Perhaps surprisingly, these entanglements have very similar properties to tangles themselves but only applied to a subset of their separations. See \autoref{sec:Entangle} for an explanation of why we think of entanglements as an axiomatisation of `local properties' of tangles.

Our main result reads as follows.

\begin{maindfn}[Friendly]
A separation $\{A,B\}$ in an entanglement $\eps$ in~$G$ is \emph{\friendly } if no other separation in~$\eps$ crosses less separations in entanglements in~$G$ than~$\{A,B\}$.
\end{maindfn}

\begin{main}\label{Main}
For every finite graph~$G$, the set of \friendly\ separations of~$G$ is a nested set of separations; and hence gives rise to a tree-decomposition distinguishing all tangles efficiently.
\end{main}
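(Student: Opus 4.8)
The plan is to show two things: first, that the set of friendly separations is nested, and second, that nestedness together with the efficient-distinguishing property yields a tree-decomposition with the desired property. The second part should be largely routine given the standard correspondence between nested separation systems and tree-decompositions (the structure theorem for nested sets of separations), so I expect the entire substance of the argument to lie in establishing nestedness.

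\medskip

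For nestedness, I would argue by contradiction. Suppose $\{A,B\}$ and $\{C,D\}$ are two friendly separations that cross. The key is the notion of the parameter being minimised: a friendly separation in an entanglement $\eps$ crosses the fewest separations-in-entanglements among all separations in $\eps$. The standard tool here is a submodularity or crossing-count argument. When two crossing separations $\{A,B\}$ and $\{C,D\}$ are given, one forms the four ``corner'' separations $\{A\cap C,\,B\cup D\}$ and its relatives via submodularity of the order function. My plan is to show that, because $\{A,B\}$ lives in some entanglement $\eps$, at least one of these corner separations also lies in $\eps$ (this is where the tangle-like axioms of entanglements should be invoked, exactly in the spirit of how tangle axioms force one corner to be ``small''), and that this corner separation crosses strictly fewer separations-in-entanglements than $\{A,B\}$ does. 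That strict decrease contradicts the friendliness (minimality) of $\{A,B\}$.

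\medskip

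The crux, and the step I expect to be the main obstacle, is the crossing-count comparison: proving that replacing $\{A,B\}$ by the appropriate corner reduces the number of crossings with separations in entanglements across the whole graph. The natural approach is a local ``uncrossing'' estimate: every separation in an entanglement that crosses the corner should be made to correspond injectively to one crossing $\{A,B\}$, with $\{C,D\}$ itself (which crosses $\{A,B\}$ but not the corner) showing the inequality is strict. This requires a careful case analysis of how a third separation can meet the crossing diagram of $\{A,B\}$ and $\{C,D\}$, using submodularity to control the orders and using the entanglement axioms to ensure corners of other entanglements behave well. Getting this injection to respect the entanglement structure globally — rather than merely counting abstract crossings — is the delicate point, since the count is taken over separations living in \emph{all} entanglements of $G$, so one must verify that corner separations inherit membership in the relevant entanglements.

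\medskip

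Finally, once nestedness is in hand, I would invoke the efficiency built into the definition: friendly separations are drawn from entanglements, and by the axiomatisation of \autoref{sec:Entangle} these are precisely the separations distinguishing tangles efficiently. Thus the nested set contains, for every distinguishable pair of tangles, an efficient distinguisher, and the associated tree-decomposition therefore efficiently distinguishes all tangles, giving a one-step, canonical proof of~(\ref{OriginalToT}).
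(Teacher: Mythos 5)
Your overall architecture matches the paper's: a crossing-number minimisation over each entanglement, an uncrossing argument via corners, the entanglement axiom \ref{Entangle1} to place corners into entanglements, and the standard correspondence between nested separation systems and tree-decompositions for the final step. But the crux you flag as ``the main obstacle'' is set up in a way that would fail. You propose to find a single corner $c$ in the entanglement $\eps$ of $\{A,B\}$ with $x(c)<x(\{A,B\})$, via an injection from separations crossing $c$ to separations crossing $\{A,B\}$. No such injection exists: the uncrossing lemma (\autoref{AllFish}) only guarantees that a separation crossing one of two \emph{opposite} corners $c,d$ crosses at least one of $\{A,B\}$ and $\{C,D\}$ --- it may cross only $\{C,D\}$. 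The true inequality (\autoref{CornerCounting}) is the paired one, $x(c)+x(d)<x(r)+x(s)$ for opposite corners, and from it one can only conclude $x(c_i)<x(s_i)$ for \emph{some} $i\in\{1,2\}$, with no control over which. Consequently the contradiction may land on the friendliness of $\{C,D\}$ rather than $\{A,B\}$, and for this to work one needs the corners to be \emph{opposite} and correctly distributed between the two entanglements: either opposite corners $c_1\in\eps_1$, $c_2\in\eps_2$, or two opposite pairs, one pair in each entanglement (conditions \ref{GstandardArg1} and \ref{GstandardArg2} of \autoref{GstandardArg}). Producing that configuration from \ref{Entangle1} and submodularity is the actual substance of the proof (the ``green corner'' case analysis in \autoref{GEntanglementDisplay}, including the subcase where all four corners have low order and one must exchange the roles of $s_1$ and $s_2$); your proposal does not identify that this is what must be proved.

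A second, smaller inaccuracy: you assert that the separations in entanglements ``are precisely the separations distinguishing tangles efficiently.'' They are not --- \autoref{FareyExample} exhibits entanglements in graphs with no tangles at all. What the final step actually needs is only the converse direction, \autoref{TanglesInduceEntanglements}: every pair of distinguishable tangles induces an entanglement consisting of its efficient distinguishers, and since every entanglement is non-empty it contains a friendly separation, so the nested set contains an efficient distinguisher for every such pair. Your conclusion survives, but the justification should go through that lemma rather than the false ``precisely.''
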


The nested sets $N(G)$ and tree-decompositions $\cT (G)$ provided by \autoref{Main} are canonical in that they commute with graph-isomorphisms: $\varphi(N(G))=N(\varphi(G))$ and $\varphi(\cT(G))=\cT(\varphi(G))$ for every graph-isomorphism $\varphi\colon G\to G'$.

The decomposition in~\autoref{Main} refines the one of~(\ref{OriginalToT}).
Indeed, not every entanglement is induced by a pair of tangles, and in fact entanglements and friendly separations can be found in graphs that host no tangles at all (\autoref{FareyExample}).

\autoref{Main} extends to locally-finite infinite graphs under additional assumptions; see \autoref{TechnicalMain}.
We also provide an abstract version of \autoref{Main}, inspired by~\cite{ASS,TreeSets,ProfilesNew,FiniteSplinters}, which can be applied to a wide variety of setups including matroids;
see \autoref{sec:AbstractEntangle}.

This note is organised as follows.
Entanglements in graphs are introduced in \autoref{sec:Entangle}.
\autoref{Main} is proved in \autoref{sec:OptimalNested}.
An infinite version of \autoref{Main} is proved in \autoref{sec:Infinite}.
Abstract versions of entanglements and of \autoref{Main} are offered in \autoref{sec:AbstractEntangle}.

\section{Entanglements in graphs}\label{sec:Entangle}

Let $G$ be any graph.
A \emph{separation} of~$G$ is a set $\{A,B\}$ such that $A\cup B=V(G)$ and $G$ contains no edge between $A\sm B$ and $B\sm A$.
We refer to $A$ and $B$ as the \emph{sides} of~$\{A,B\}$, and call $A\cap B$ the \emph{separator} of~$\{A,B\}$.
The size $\vert A\cap B\vert$ of the separator is the \emph{order} of~$\{A,B\}$.
A~separation~$\{A,B\}$ is \emph{proper} if $A\sm B$ and $B\sm A$ are non-empty.
Two separations $\{A,B\}$ and $\{C,D\}$ of~$G$ are \emph{nested}~if, after possibly renaming their sides, they satisfy $A\se C$ and $B\supseteq D$.
Two separations that are not nested are said to \emph{cross}.
A set of separations of~$G$ is \emph{nested} if its elements are pairwise nested.

\begin{figure}[ht]
    \centering
    \includegraphics[height=5\baselineskip]{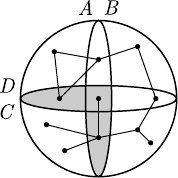}
    \caption{$\{A\cap C, B\cup D\}$ is one of the four corners of $\{A,B\}$ and~$\{C,D\}$}
    \label{fig:CrossingDiagram}
\end{figure}

For a depiction of the setting for the next definitions, see \autoref{fig:CrossingDiagram}.
If $\{A,B\}$ and $\{C,D\}$ cross, then their four \emph{corners} are the separations $\{A\cap C,B\cup D\}$, $\{A\cap D,B\cup C\}$, $\{B\cap D,A\cup C\}$ and $\{B\cap C,A\cup D\}$. 
The corners $\{A\cap C,B\cup D\}$ and $\{B\cap D,A\cup C\}$ are \emph{opposite}, and so are the corners $\{A\cap D,B\cup C\}$ and~$\{B\cap C,A\cup D\}$.
Any two corners that are not opposite are \emph{adjacent}.
The two adjacent corners $\{A\cap C,B\cup D\}$ and $\{A\cap D,B\cup C\}$ are said to \emph{lie on the same side} of~$\{A,B\}$.
Similarly, the two adjacent corners $\{B\cap D,A\cup C\}$ and $\{B\cap C,A\cup D\}$ are said to \emph{lie on the same side} of~$\{A,B\}$.

An \emph{entanglement} in~$G$ is a non-empty set $\eps$ of proper separations of~$G$ such that~$\eps$ satisfies~\ref{Entangle1}:
\begin{enumerate}[label=($\Ecal$)]
    \item\label{Entangle1} 
    If a separation~$\{A,B\}\in \eps$ is crossed by a separation of~$G$ so that two corners lying on the same side of~$\{A,B\}$ have order at most~$\vert A\cap B\vert$, then at least one of these corners has order equal to~$\vert A\cap B\vert$ and is contained in~$\eps$.
\end{enumerate}

A separation $\{A,B\}$ in an entanglement $\eps$ in~$G$ is \emph{\friendly } if no other separation in~$\eps$ crosses less separations in entanglements in~$G$ than~$\{A,B\}$.

We conclude this section with three examples.
The first example uses the terminology of~\cite[§12.5]{DiestelBook16noEE}. 
We state it as a lemma because it is a key ingredient of the proof of \autoref{Main}.

\begin{lem}
\label{TanglesInduceEntanglements}
Every pair of distinguishable tangles in a graph induces an entanglement, which consists of the separations efficiently distinguishing the two tangles.
\end{lem}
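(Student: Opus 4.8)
The plan is to show that the set $E$ of separations efficiently distinguishing the two given distinguishable tangles $\tau_1,\tau_2$ is an entanglement. Write $\ell$ for the common order of the separations in $E$, so that $\ell$ is the least order of any separation distinguishing $\tau_1$ from $\tau_2$. First I would record two easy preliminaries. The set $E$ is non-empty because $\tau_1,\tau_2$ are distinguishable and orders are well-ordered; and every $\{A,B\}\in E$ is proper, since if $A\setminus B=\emptyset$ then $B=V(G)$, and the tangle axiom forbidding $V(G)$ as a small side forces both tangles to orient $\{A,B\}$ towards $B$, so that $\{A,B\}$ would fail to distinguish them. It then remains to verify the entanglement axiom \ref{Entangle1} for $E$.

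So fix $\{A,B\}\in E$, crossed by a separation $\{C,D\}$ of $G$ so that the two corners $c_1=\{A\cap C,B\cup D\}$ and $c_2=\{A\cap D,B\cup C\}$ lying on the same side of $\{A,B\}$ have order at most $\ell=\vert A\cap B\vert$. Since $\ell$ is below the order of both tangles, both tangles orient $c_1$ and $c_2$. Say $\tau_1$ lives in $A$ and $\tau_2$ in $B$. The first step is to locate $\tau_2$: I claim it lives in the big side $B\cup D$ of $c_1$ and in the big side $B\cup C$ of $c_2$. Indeed, were $\tau_2$ to live in $A\cap C$ instead, then $A$ and $B\cup D$ would both be small sides of $\tau_2$; but $A\cup(B\cup D)=V(G)$, with every edge lying inside $A$ or inside $B\cup D$ because $\{A,B\}$ is a separation, so these two small sides would already cover $G$, contradicting the tangle axiom that no three (hence no two) small sides cover~$G$. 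The same argument applies to $c_2$.

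The heart of the proof is then a single further application of this axiom to $\tau_1$. Suppose for contradiction that neither $c_1$ nor $c_2$ distinguishes $\tau_1$ from $\tau_2$. As $\tau_2$ already lives in both big sides, this forces $\tau_1$ to live in $B\cup D$ and in $B\cup C$ too, i.e.\ $A\cap C$ and $A\cap D$ are small sides of $\tau_1$. Since $\tau_1$ lives in $A$, the set $B$ is a small side of $\tau_1$ as well; and because $C\cup D=V(G)$ we have $(A\cap C)\cup(A\cap D)\cup B=A\cup B=V(G)$, with every edge lying inside one of these three sets (using that both $\{A,B\}$ and $\{C,D\}$ are separations). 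This exhibits three small sides of $\tau_1$ covering $G$, a contradiction. Hence one of $c_1,c_2$, say $c_1$, has $\tau_1$ on its side $A\cap C$ while $\tau_2$ lies in $B\cup D$, so $c_1$ distinguishes the two tangles. Finally, efficiency of $\{A,B\}$ makes every distinguishing separation have order at least $\ell$, so $c_1$ has order exactly $\ell$ and lies in $E$, which is precisely the conclusion of \ref{Entangle1}.

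The main obstacle is conceptual rather than computational: because $\{C,D\}$ may have arbitrarily large order, the tangles need not orient $\{C,D\}$ itself, so one cannot uncross by applying submodularity to $\{A,B\}$ and $\{C,D\}$ in the usual way. The whole point of the hypothesis is that the two relevant corners have small order and are therefore oriented by both tangles; the identity $A=(A\cap C)\cup(A\cap D)$ coming from $C\cup D=V(G)$ is then what lets the no-three-sides-cover axiom do all the work directly, with no reference to the order of $\{C,D\}$ at all.
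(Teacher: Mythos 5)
Your proposal is correct and follows essentially the same route as the paper: non-emptiness and properness are immediate, and the entanglement axiom is verified by noting that the low-order corners are oriented by both tangles, that consistency (which you derive from the covering axiom) forces $\tau_2$ onto the big sides, and that the no-three-small-sides-cover axiom applied to $A\cap C$, $A\cap D$ and $B$ forces $\tau_1$ onto a small corner side, which then distinguishes efficiently. The only difference is that you spell out the covering computations that the paper leaves implicit.
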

\begin{proof}
Let $\tau$ and $\tau'$ be two distinguishable tangles in a graph~$G$, and let $\eps$ be the set of all separations of~$G$ which efficiently distinguish $\tau$ and~$\tau'$.
The set $\eps$ is non-empty since $\tau$ and~$\tau'$ are distinguishable, and the separations in~$\eps$ are proper because tangles do not contain separations of the form~$(V(G),B)$.
We claim that $\eps$ satisfies~\ref{Entangle1}.
For this, suppose that $\{A,B\}\in \eps$ is crossed by a separation $\{C,D\}$ of~$G$ so that the two corners $c_1:=\{A\cap C,B\cup D\}$ and $c_2:=\{A\cap D,B\cup C\}$ have order at most~$\vert A\cap B\vert$.
Without loss of generality, $\tau$ orients $\{A,B\}$ towards~$A$ and $\tau'$ orients $\{A,B\}$ towards~$B$.
Since the corners $c_1$ and $c_2$ have order at most~$\vert A\cap B\vert$, they are oriented by~$\tau$ and~$\tau'$.
The tangle $\tau'$ orients both $c_1$ and $c_2$ towards $B$ by consistency.
The tangle $\tau$ cannot orient both $c_1$ and $c_2$ towards $B$ since tangles do not contain three separations whose small sides together cover~$G$.
Therefore, $\tau$~orients one of $c_1$ and $c_2$ away from~$B$.
Then that corner distinguishes $\tau$ and~$\tau'$, and must do so efficiently, hence it lies in~$\eps$.
\end{proof}

If $\tau$ is a tangle in~$G$, and $\sigma_i$ for $i\in I$ are the tangles in~$G$ that are distinguishable from~$\tau$, then for every $\sigma_i$ we obtain an entanglement~$\eps_i\se\tau$ by \autoref{TanglesInduceEntanglements}, and these $\eps_i$ contain all the information from~$\tau$ that is sufficient to efficiently distinguish $\tau$ from all $\sigma_i$. This is why intuitively, we may think of entanglements as an axiomatisation of `local properties' of tangles.

\begin{figure}[ht]
    \centering
    \includegraphics[height=5\baselineskip]{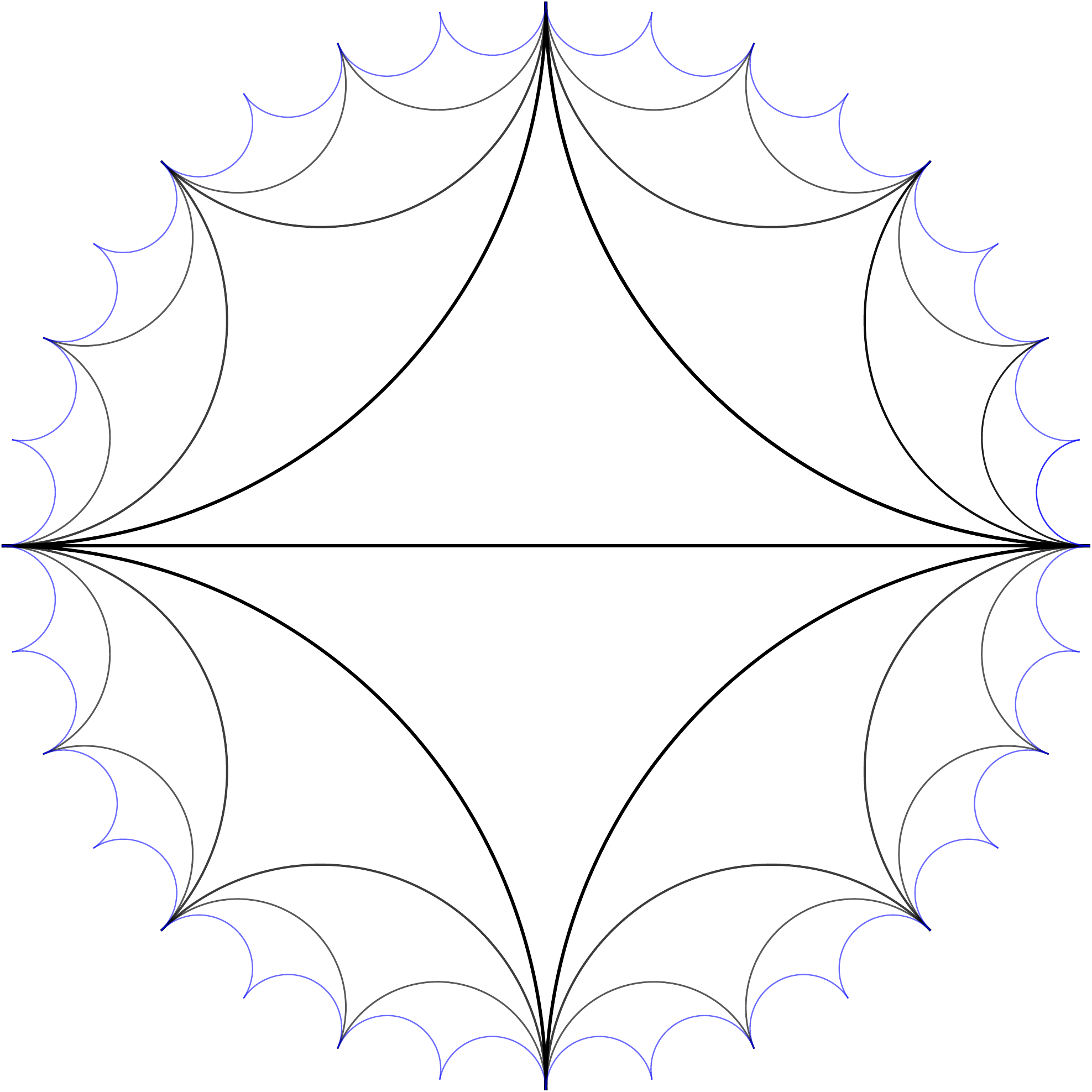}
    \caption{The Farey graph of order~$4$}
    \label{fig:FareyK}
\end{figure}

\begin{eg}\label{FareyExample}
The \emph{Farey graph $F_1$ of order~$1$} is obtained from a 4-cycle whose edges are coloured blue by adding a chord.
Recursively, the \emph{Farey graph $F_{k+1}$ of order $k+1$} is obtained from $F_k$ by adding a new vertex $v_e$ for each blue edge $e$ of~$F_k$, joining it to the two endvertices of~$e$ with blue edges, and uncolouring the previously blue edge~$e$; see \autoref{fig:FareyK}.
Now let $k\in\Nbb$ be any number and let us consider~$F_k$.

Each non-blue edge of~$F_k$ leaves two components after deleting its endvertices, and therefore defines a separation of~$F_k$ in the obvious way.
Let $N$ be the set of all separations of~$F_k$ defined in this way.
We claim that each separation in~$N$ forms an entanglement of its own.
To see that these singletons satisfy~\ref{Entangle1}, consider any separation $\{A,B\}\in N$, and let $\{C,D\}$ be any separation of~$G$ which crosses~$\{A,B\}$.
It~suffices to show that of every two corners lying on the same side of $\{A,B\}$, at least one has order larger than~$\vert A\cap B\vert=2$.
Since $A\cap B$ induces a~$K^2$, we may assume without loss of generality that $A\cap B\se C$.
By symmetry, it suffices to show that the corner $c:=\{A\cap C,B\cup D\}$ has order at least three.
If the separator of $c$ has size at most two, then it is equal to $A\cap B$, and $c=\{A,B\}$ follows because $A\sm B$ and $B\sm A$ are connected.
In particular, $A\cap C=A$ implies~$A\se C$, and $B\cup D=B$ implies $B\supseteq D$, so $\{A,B\}$ and $\{C,D\}$ are nested.
Since this would contradict our assumptions, $c$ must have order at least three, as desired.
Hence, each separation in~$N$ forms an entanglement in~$F_k$, so each separation in~$N$ is a \friendly\ separation of~$F_k$. 

The set of all separations of~$F_k$ whose separators span a~$K^2$ is nested and witnesses that there is no tangle in $F_k$ by  \cite[Theorem~12.5.1]{DiestelBook16noEE}.
\end{eg}

\begin{figure}[ht]
    \centering
    \includegraphics[height=5\baselineskip]{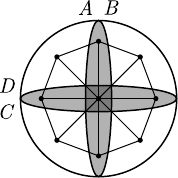}
    \caption{The situation in \autoref{eg:Wheel}}
    \label{fig:Wheel}
\end{figure}

\begin{eg}\label{eg:Wheel}
We claim that wheels have no entanglements.
Indeed, let $G$ be a wheel and let us suppose for a contradiction that there is an entanglement in~$G$.
Let $\{A,B\}$ be a separation of~$G$ that lies in an entanglement and whose side $A$ is inclusionwise minimal among all separations of~$G$ that lie in entanglements.
Since $A\sm B$ and $B\sm A$ are non-empty and the centre $c$ of the wheel is joined to all other vertices, $c$~can only be contained in $A\cap B$.
Furthermore, $\vert A\cap B\vert\ge 3$ since $G$ is 3-connected.
Pick any two vertices $a\in A\sm B$ and $b\in B\sm A$, and let $\{C,D\}$ be the separation of~$G$ with $C\cap D=\{a,b,c\}$.
Let $P$ and $Q$ be the two internally disjoint $a$--$b$ paths through the rim of the wheel.
Since $A\cap B$ meets both $P$ and~$Q$ in internal vertices, it follows that $\{A,B\}$ and $\{C,D\}$ cross and that all four corners have order at most~$\vert A\cap B\vert$.
Hence \ref{Entangle1} implies that at least one of the corners on the $A$-side of $\{A,B\}$ lies in an entanglement.
This contradicts the minimal choice of~$A$.
\end{eg}

\section{\Friendly\ separations are nested}\label{sec:OptimalNested}

For a finite graph~$G$ and a separation $s$ of~$G$, let us denote by $x(s)$ the number of separations in entanglements in~$G$ which are crossed by~$s$, and call $x(s)$ the \emph{crossing number} of~$s$ in~$G$.

\begin{lem}\label{GsimplyResolvable}
Let $G$ be any finite graph.
Suppose that for all entanglements $\eps_1,\eps_2$ in~$G$ \emph{(}possibly with $\eps_1=\eps_2$\emph{)} and any two crossing separations $s_1\in\eps_1$ and $s_2\in\eps_2$, there exist an index $i\in\{1,2\}$ and a separation $c\in\eps_i$ such that $x(c)<x(s_i)$.
Then the \friendly\ separations of~$G$ are nested.\qed
\end{lem}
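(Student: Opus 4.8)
The plan is to argue by contradiction, essentially by unwinding the definition of \friendly\ and then invoking the hypothesis once. Suppose, for a contradiction, that two \friendly\ separations $s_1$ and $s_2$ of~$G$ cross. By the definition of \friendly\ separations, for each $i\in\{1,2\}$ there is an entanglement~$\eps_i$ in~$G$ with $s_i\in\eps_i$ such that no other separation in~$\eps_i$ crosses fewer separations in entanglements than~$s_i$ does; that is, $x(s_i)\le x(t)$ for every $t\in\eps_i$. I would record this minimality as the one fact about each $s_i$ that will be used.

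Next I would feed exactly this data into the hypothesis of the lemma. Applying it to the entanglements~$\eps_1,\eps_2$ and the crossing pair $s_1\in\eps_1$, $s_2\in\eps_2$ produces an index $i\in\{1,2\}$ and a separation $c\in\eps_i$ with $x(c)<x(s_i)$. Since $x(c)<x(s_i)$ forces $x(c)\ne x(s_i)$, we have $c\ne s_i$, so $c$ is a separation in~$\eps_i$ \emph{other} than~$s_i$ that crosses strictly fewer separations in entanglements than~$s_i$. This contradicts the choice of~$\eps_i$ witnessing that $s_i$ is \friendly, and the contradiction shows that no two \friendly\ separations cross; hence the set of \friendly\ separations of~$G$ is nested.

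I do not expect any genuine obstacle in this argument: the only two points needing a word of care are that each \friendly\ separation comes equipped with a witnessing entanglement in which it minimises~$x$ (immediate from the definition), and that the separation~$c$ returned by the hypothesis is distinct from~$s_i$ (immediate from $x(c)\ne x(s_i)$). The lemma is really a bridge whose hypothesis has been tailored to be exactly the local resolvability condition that rules out crossings between minimisers; all the substantive work lies not here but in verifying that hypothesis for the graphs of interest, which is carried out separately using property~\ref{Entangle1} and \autoref{TanglesInduceEntanglements}.
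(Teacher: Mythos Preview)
Your argument is correct and is exactly the intended one: the paper marks this lemma with a bare~$\qed$ and gives no proof, since it follows immediately from the definition of \friendly\ together with a single application of the hypothesis, just as you describe.
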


\begin{lem}\label{AllFish}
Let $G$ be any graph, let $r,s$ be two crossing separations of~$G$, and let $c,d$ be two opposite corners of~$r,s$.
For every separation $t$ of~$G$ the following assertions hold:
\begin{enumerate}[label=\textnormal{(\roman*)}]
    \item\label{F1} If $t$ crosses at least one of $c$ and~$d$, then $t$ crosses at least one of $r$ and~$s$.
    \item\label{F2} If $t$ crosses both $c$ and~$d$, then $t$ crosses both $r$ and~$s$.
    \item\label{F3} Neither $r$ nor $s$ crosses $c$ or~$d$.
\end{enumerate}
\end{lem}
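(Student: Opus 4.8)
The plan is to translate everything into the language of oriented separations and to argue entirely by chaining inequalities in the natural partial order. For a separation $\{A,B\}$ I regard the two ordered pairs $(A,B)$ and $(B,A)$ as its \emph{orientations}, ordered by $(A,B)\le(C,D)$ iff $A\se C$ and $D\se B$; this is exactly the paper's renaming convention, so that two separations are nested precisely when some orientation of one is $\le$ some orientation of the other, and they cross precisely when no such comparability holds. Writing $\vec r=(A,B)$, $\overleftarrow r=(B,A)$, $\vec s=(C,D)$, $\overleftarrow s=(D,C)$, the four corners are the underlying separations of the four pairwise meets $(A,B)\wedge(C,D)=(A\cap C,B\cup D)$, etc., and after possibly renaming the sides of $s$ I may assume that $c$ underlies $\vec r\wedge\vec s$ and $d$ underlies $\vec r\vee\vec s$. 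The only facts I will use are $\vec r\wedge\vec s\le\vec r,\vec s$ and $\vec r\vee\vec s\ge\vec r,\vec s$; that $\wedge,\vee$ really are greatest lower and least upper bounds (a one-line set computation, e.g.\ $E\se A$ and $E\se C$ give $E\se A\cap C$); and that reversing all sides exchanges $\wedge$ with $\vee$, so $\overleftarrow c=\overleftarrow r\vee\overleftarrow s$ and $\overleftarrow d=\overleftarrow r\wedge\overleftarrow s$.

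With $\vec c=\vec r\wedge\vec s$ and $\vec d=\vec r\vee\vec s$, assertion \ref{F3} is immediate: $\vec c\le\vec r$, $\vec c\le\vec s$, $\vec r\le\vec d$ and $\vec s\le\vec d$, so each of $r,s$ is comparable with, hence nested with, each of $c,d$.

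For \ref{F1} and \ref{F2} I would prove the contrapositives. Orient $t$ as $\vec t$ and record its position relative to $r$ by which of the four comparabilities $\vec t\le\vec r$, $\vec r\le\vec t$, $\vec t\le\overleftarrow r$, $\overleftarrow r\le\vec t$ holds (being nested with $r$ guarantees at least one), and similarly relative to $s$. For \ref{F2} I assume $t$ is nested with $r$ or with $s$, say with $r$; then each of the four positions chains directly to a corner, namely $\vec t\le\vec r\le\vec d$, or $\vec c\le\vec r\le\vec t$, or $\vec t\le\overleftarrow r\le\overleftarrow c$, or $\overleftarrow d\le\overleftarrow r\le\vec t$, so $t$ is nested with $c$ or with $d$; the case ``nested with $s$'' is symmetric since $c,d$ are symmetric in $r,s$. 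Notably this step uses no hypothesis on $r,s$ beyond the chains. For \ref{F1} I assume $t$ is nested with both $r$ and $s$ and aim to show it is nested with $c$ (and, by the symmetry below, with $d$): I list the position-combinations that force $\vec t$ comparable with $\vec c$ --- $\vec r\le\vec t$ gives $\vec c\le\vec r\le\vec t$; $\vec s\le\vec t$ gives $\vec c\le\vec s\le\vec t$; $\vec t\le\vec r$ together with $\vec t\le\vec s$ gives $\vec t\le\vec c$; $\vec t\le\overleftarrow r$ gives $\vec t\le\overleftarrow r\le\overleftarrow c$; $\vec t\le\overleftarrow s$ gives $\vec t\le\overleftarrow s\le\overleftarrow c$; and $\overleftarrow r\le\vec t$ together with $\overleftarrow s\le\vec t$ gives $\overleftarrow c\le\vec t$ --- and then check that the only two combinations left over are $\vec t\le\vec r$ with $\overleftarrow s\le\vec t$, and $\overleftarrow r\le\vec t$ with $\vec t\le\vec s$.

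The crux, and the one step I expect to require real thought rather than bookkeeping, is disposing of those two leftover combinations; this is also precisely where \ref{F1} becomes stronger than \ref{F2}. Each leftover chains to a comparability between an orientation of $r$ and one of $s$: from $\overleftarrow s\le\vec t\le\vec r$ we get $\overleftarrow s\le\vec r$, and from $\overleftarrow r\le\vec t\le\vec s$ we get $\overleftarrow r\le\vec s$. Either says $r$ and $s$ are nested, contradicting the hypothesis that they cross; so both leftovers are vacuous and $t$ is nested with $c$. Finally, rerunning the whole argument with the sides of both $r$ and $s$ interchanged replaces the meet-corner by $d$ and yields nestedness with $d$ as well, completing \ref{F1}.
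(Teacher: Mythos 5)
Your proof is correct. The paper itself only cites Diestel's Lemma~12.5.5 for assertion~(i) and declares (ii) and (iii) straightforward resp.\ trivial; your order-theoretic argument --- realising the opposite corners $c,d$ as the meet and join of suitably oriented $r,s$ and reducing nestedness to comparability, with the two ``leftover'' position-combinations killed by the hypothesis that $r,s$ cross --- is essentially the standard proof of that cited lemma, so this is the same approach, just written out in full.
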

\begin{proof}
\ref{F1} holds by \cite[Lemma~12.5.5]{DiestelBook16noEE}, whose proof works for both finite and infinite graphs.
\ref{F2} is straightforward if one shows the contrapositive.
\ref{F3} is trivial.
\end{proof}

\begin{cor}\label{CornerCounting}
Let $G$ be any finite graph, let $r,s$ be two crossing separations in entanglements in~$G$, and let $c,d$ be two opposite corners of~$r,s$.
For every separation $t$ of~$G$ we have $x(c)+x(d)<x(r)+x(s)$.
\end{cor}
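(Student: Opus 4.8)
The plan is to prove the displayed strict inequality by comparing the two sides term by term, ranging over the separations that the crossing numbers actually count. Since crossing is a symmetric relation, for any separation $q$ the number $x(q)$ equals the number of separations $t$ lying in some entanglement in~$G$ for which $t$ crosses~$q$; and as $G$ is finite there are only finitely many such~$t$. Grouping by~$t$, this lets me rewrite
\[
x(c)+x(d)=\sum_{t}\bigl(\text{number of separations among }c,d\text{ that }t\text{ crosses}\bigr),
\qquad
x(r)+x(s)=\sum_{t}\bigl(\text{number of separations among }r,s\text{ that }t\text{ crosses}\bigr),
\]
where both sums range over all separations $t$ lying in entanglements in~$G$. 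Hence it suffices to show that each summand on the left is at most the corresponding summand on the right, and that strict inequality holds for at least one~$t$.

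For the term-by-term inequality I fix such a $t$ and apply \autoref{AllFish}. If $t$ crosses neither $c$ nor~$d$, the left summand is~$0$ and there is nothing to check. If $t$ crosses exactly one of $c,d$, then by~\ref{F1} it crosses at least one of $r,s$, so the left summand equals~$1$ while the right summand is at least~$1$. If $t$ crosses both $c$ and~$d$, then by~\ref{F2} it crosses both $r$ and~$s$, so both summands equal~$2$. In every case the left summand is at most the right summand.

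For strictness I take $t=r$, which is a legitimate index since $r$ lies in an entanglement in~$G$ and is therefore counted by each crossing number. By~\ref{F3}, $r$ crosses neither $c$ nor~$d$, so its left summand is~$0$; but $r$ does not cross itself and $r$ crosses~$s$ by hypothesis, so its right summand is~$1$. Thus the inequality is strict at $t=r$. Summing the term-by-term inequalities over the finite index set and using this one strict term yields $x(c)+x(d)<x(r)+x(s)$, as required.

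Once \autoref{AllFish} is available the argument is essentially bookkeeping, so I do not expect a serious obstacle; the two points that need care are the reduction to a per-$t$ comparison (which hinges on crossing being symmetric, so that the crossing numbers really are sums of indicator terms over~$t$) and the observation that $r$ itself—one of the separations driving the whole count—supplies the strict term, precisely because it lies in an entanglement and is hence counted.
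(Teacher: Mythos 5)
Your proof is correct and follows essentially the same route as the paper: the term-by-term comparison over separations $t$ in entanglements is exactly how \ref{F1}--\ref{F3} yield $x(c)+x(d)\le x(r)+x(s)$, and your strictness argument via $t=r$ (counted on the right by crossing $s$, but contributing nothing on the left by \ref{F3}) is the paper's argument, which merely notes in addition that $s$ supplies a second such strict term.
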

\begin{proof}
Combining \ref{F1}--\ref{F3} of \autoref{AllFish} gives $x(c)+x(d)\le x(r)+x(s)$.
Since $s$ and $r$ lie in entanglements and cross, they are counted in $x(r)$ and in $x(s)$, but they contribute to neither $x(c)$ nor $x(d)$ by \ref{F3}; hence the inequality is strict.
\end{proof}

\begin{lem}\label{GstandardArg}
Let $G$ be any finite graph.
Suppose that for all entanglements $\eps_1,\eps_2$ in~$G$ \emph{(}possibly with $\eps_1=\eps_2$\emph{)} and any two crossing separations $s_1\in\eps_1$ and $s_2\in\eps_2$, at least one of the following conditions is satisfied:
\begin{enumerate}[label=\textnormal{(C\arabic*)}]
    \item\label{GstandardArg1} there are opposite corners $c_1,c_2$ of~$s_1,s_2$ with $c_1\in\eps_1$ and $c_2\in\eps_2$;
    \item\label{GstandardArg2} two opposite corners of $s_1,s_2$ are in~$\eps_1$, and the other two opposite corners of $s_1,s_2$ are in~$\eps_2$.
\end{enumerate}
Then the \friendly\ separations of~$G$ are nested.
\end{lem}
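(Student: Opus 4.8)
The plan is to reduce the statement to \autoref{GsimplyResolvable}. That lemma asks us to produce, for any two crossing separations $s_1\in\eps_1$ and $s_2\in\eps_2$ in entanglements, an index $i\in\{1,2\}$ and a separation $c\in\eps_i$ with $x(c)<x(s_i)$. So the entire argument is a two-case analysis according to whether~\ref{GstandardArg1} or~\ref{GstandardArg2} holds, with \autoref{CornerCounting} — which gives $x(c)+x(d)<x(s_1)+x(s_2)$ for any opposite corners $c,d$ of the crossing pair $s_1,s_2$ in entanglements — supplying all the arithmetic.

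In case~\ref{GstandardArg1} there are opposite corners $c_1\in\eps_1$ and $c_2\in\eps_2$. Applying \autoref{CornerCounting} to the opposite pair $c_1,c_2$ gives $x(c_1)+x(c_2)<x(s_1)+x(s_2)$, so we cannot simultaneously have $x(c_1)\ge x(s_1)$ and $x(c_2)\ge x(s_2)$. Hence either $x(c_1)<x(s_1)$, and we take $i=1$ and $c=c_1$, or else $x(c_2)<x(s_2)$, and we take $i=2$ and $c=c_2$. In both situations the requirement of \autoref{GsimplyResolvable} is met.

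Case~\ref{GstandardArg2} is where I expect the only genuine idea to be needed. Now one opposite pair, say $a_1,b_1$, lies in $\eps_1$, while the remaining opposite pair $a_2,b_2$ lies in $\eps_2$. I would argue by contradiction, assuming that no admissible $c$ exists; then every separation of $\eps_1$ has crossing number at least $x(s_1)$ and every separation of $\eps_2$ has crossing number at least $x(s_2)$, so in particular $x(a_1)+x(b_1)\ge 2x(s_1)$ and $x(a_2)+x(b_2)\ge 2x(s_2)$. Applying \autoref{CornerCounting} separately to the opposite pair $a_1,b_1$ and to the opposite pair $a_2,b_2$ yields $x(a_1)+x(b_1)<x(s_1)+x(s_2)$ and $x(a_2)+x(b_2)<x(s_1)+x(s_2)$. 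Combining the two chains gives $2x(s_1)<x(s_1)+x(s_2)$ and $2x(s_2)<x(s_1)+x(s_2)$, that is, $x(s_1)<x(s_2)$ and $x(s_2)<x(s_1)$ at once — a contradiction. Hence an admissible $c$ exists here as well. The crux is thus to invoke the corner-counting estimate on \emph{both} opposite pairs and pit the resulting cyclic inequalities against one another.

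Since one of~\ref{GstandardArg1},~\ref{GstandardArg2} holds for every crossing pair of separations in entanglements by hypothesis, the assumption of \autoref{GsimplyResolvable} is verified in all cases, and that lemma yields that the \friendly\ separations of $G$ are nested.
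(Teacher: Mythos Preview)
Your proof is correct and follows essentially the same approach as the paper: reduce to \autoref{GsimplyResolvable} and use \autoref{CornerCounting} on the opposite corner pairs in each case. The only stylistic difference is in case~\ref{GstandardArg2}, where the paper argues directly after assuming $x(s_1)\le x(s_2)$ (so that $x(c_2)+x(c_2')<x(s_1)+x(s_2)\le 2x(s_2)$ forces one of $c_2,c_2'$ to work), whereas you obtain the same conclusion by a symmetric contradiction using both opposite pairs; the content is identical.
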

\begin{proof}
It suffices to show that the premise of \autoref{GsimplyResolvable} is satisfied.
For this, let $\eps_1,\eps_2$ be any entanglements in~$G$ (possibly with $\eps_1=\eps_2$) and let $s_1,s_2$ be two crossing separations with $s_1\in \eps_1$ and~$s_2\in \eps_2$.

First, suppose that by~\ref{GstandardArg1} there are opposite corners $c_1,c_2$ of $s_1,s_2$ with $c_1\in \eps_1$ and $c_2\in \eps_2$.
By \autoref{CornerCounting}, we have $x(c_1)+x(c_2)< x(s_1)+x(s_2)$.
An indirect proof finds an $i\in \{1,2\}$ with $x(c_i)<x(s_i)$.

Second, suppose that by~\ref{GstandardArg2} there are two opposite corners $c_1,c_1'$ of $s_1,s_2$ are in~$\eps_1$, and the other two opposite corners $c_2,c_2'$ of $s_1,s_2$ are in~$\eps_2$.
By \autoref{CornerCounting}, we have $x(c_i)+x(c_i')< x(s_1)+x(s_2)$ for both~$i=1,2$.
Without loss of generality, we have $x(s_1)\le x(s_2)$.
Hence either $x(c_2)<x(s_2)$ or $x(c_2')<x(s_2)$.
\end{proof}

Let us write $|s|:=|A\cap B|$ for a separation~$s=\{A,B\}.$
If two separations $s_1$ and $s_2$ of a graph~$G$ cross and $c_1,c_2$ are two opposite corners of $s_1,s_2$, then the orders of these corners sum to $\vert c_1\vert+\vert c_2\vert=\vert s_1\vert+\vert s_2\vert$.
The important part of this equality is the inequality $\vert c_1\vert+\vert c_2\vert\le \vert s_1\vert+\vert s_2\vert$, which is known as \emph{submodularity}, and which is the only part of the equality that we will need in the proofs.

\begin{thm}\label{GEntanglementDisplay}
The \friendly\ separations of any finite graph are nested.
\end{thm}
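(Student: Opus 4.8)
The plan is to derive \autoref{GEntanglementDisplay} from \autoref{GstandardArg}: it suffices to check that for any entanglements $\eps_1,\eps_2$ in $G$ (possibly with $\eps_1=\eps_2$) and any crossing separations $s_1\in\eps_1$ and $s_2\in\eps_2$, at least one of \ref{GstandardArg1} and \ref{GstandardArg2} holds. Both of these conditions are invariant under interchanging the pairs $(s_1,\eps_1)$ and $(s_2,\eps_2)$, so I may assume $k_1:=|s_1|\le|s_2|=:k_2$. Write out the four corners of $s_1$ and $s_2$ as in \autoref{fig:CrossingDiagram}.

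The engine is a two-sided reading of the entanglement axiom \ref{Entangle1}, applied to $s_1\in\eps_1$ and to $s_2\in\eps_2$ with the respective other separation as the crosser. Positively, if two corners on one side of $s_i$ both have order at most $k_i$, then \ref{Entangle1} deposits one of them, necessarily of order exactly $k_i$, into $\eps_i$. Negatively, \ref{Entangle1} forbids the configuration in which two corners on one side of $s_i$ both have order strictly below $k_i$, since then there is no corner of order exactly $k_i$ to deposit. I will use these forbidden strictly-small sides as constraints, together with submodularity $|c|+|\bar c|\le k_1+k_2$ for opposite corners $c,\bar c$; no more than the inequality is needed.

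First I would show that $s_2$ has a side both of whose corners have order at most $k_2$, and likewise that $s_1$ has a side both of whose corners have order at most $k_1$; only then does the positive reading apply. Each is proved by contradiction, and the two arguments are dual. If every side of $s_2$ carried a corner of order $>k_2$, then picking one such corner on each side produces two large corners that either are opposite, where $|c|+|\bar c|>2k_2\ge k_1+k_2$ contradicts submodularity, or lie on a common side of $s_1$, in which case submodularity pushes the two corners on the opposite side of $s_1$ strictly below $k_1$ --- a forbidden strictly-small side for $s_1$. Symmetrically, if every side of $s_1$ carried a corner of order $>k_1$, then two such corners either are opposite (forcing, via submodularity and the forbidden small sides of $s_2$, the complementary pair up to order $\ge k_2$ and hence a submodularity violation) or lie on a common side of $s_2$, forcing the two corners on the opposite side of $s_2$ strictly below $k_2$, again forbidden.

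Applying the positive reading on these good sides yields a corner $c_1\in\eps_1$ of order exactly $k_1$ and a corner $c_2\in\eps_2$ of order exactly $k_2$. It remains to match them up. When $k_1<k_2$ a short inspection of the possible order patterns shows that one can always take $c_1$ and $c_2$ to be opposite --- if a good side of $s_2$ contributes its unique order-$k_2$ corner, its opposite lands on the good side of $s_1$, and the boundary case where both sides of $s_2$ are good supplies both candidates --- so \ref{GstandardArg1} holds. The delicate case, which I expect to be the main obstacle, is $k_1=k_2$ with all four corners of this common order: here every side of both $s_i$ is good, but \ref{Entangle1} guarantees only one corner of $\eps_1$ on each side of $s_1$ and one corner of $\eps_2$ on each side of $s_2$, and these need not be opposite. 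The resolution is a finite combinatorial check: if \ref{GstandardArg1} fails --- i.e.\ no opposite pair is split between $\eps_1$ and $\eps_2$ --- then the one-per-side guarantees force an entire opposite pair into $\eps_1$ and the complementary opposite pair into $\eps_2$, which is exactly \ref{GstandardArg2}. In either case the hypothesis of \autoref{GstandardArg} is met (with \autoref{CornerCounting} supplying the strict inequality it needs), and \autoref{GEntanglementDisplay} follows.
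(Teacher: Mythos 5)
Your proposal is correct and takes essentially the same route as the paper: reduce to \autoref{GstandardArg} and then run a case analysis on the orders of the four corners, using submodularity together with the positive and negative readings of \ref{Entangle1} to land in \ref{GstandardArg1} except when all four corners have equal order and the opposite pairs split between $\eps_1$ and $\eps_2$, which gives \ref{GstandardArg2}. The only difference is organisational --- the paper counts corners of order at most $\vert s_2\vert$ (at least three are green) and splits into the three-green and four-green cases, whereas you split on $k_1<k_2$ versus $k_1=k_2$; just note that your two explicit cases do not literally cover ``$k_1=k_2$ but not all four corners of that order'', although your matching argument disposes of it verbatim.
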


\begin{proof}
It suffices to show that the premise of \autoref{GstandardArg} is satisfied.
For this, let $\eps_1$ and $\eps_2$ be any entanglements in~$G$, possibly with $\eps_1=\eps_2$, and let $s_1\in \eps_1$ and $s_2\in \eps_2$ be two crossing separations.
Without loss of generality, we have $\vert s_1\vert\le \vert s_2\vert$.
Let us colour a corner of $s_1,s_2$ green if it has order at most~$\vert s_2\vert$.

\begin{sublem}\label{atLeastThreeGreen}
At least three corners of $s_1,s_2$ are green.
\end{sublem}
\begin{proof}
Suppose for a contradiction that at most two corners of $s_1,s_2$ are green.
By submodularity and since~$\vert s_1\vert\le\vert s_2\vert$, at least one of any two opposite corners must be green.
So there are exactly two green corners, and since they cannot be opposite they must be adjacent.
As the remaining two corners are not green by assumption, they have order greater than~$\vert s_2\vert$.
By submodularity, this means that the green corners in fact have order less than~$\vert s_1\vert$.
Then either $\eps_1$ or $\eps_2$ contains a green corner by \ref{Entangle1}.
But then this green corner has order equal to $\vert s_1\vert$ or $\vert s_2\vert$ by~\ref{Entangle1}, contradicting our observation that it has order less than~$\vert s_1\vert$ and~$\vert s_2\vert$.
\end{proof}

By \autoref{atLeastThreeGreen}, at least three corners of $s_1,s_2$ are green.
Hence it suffices to consider the following two cases.
See \autoref{fig:Case1} for a depiction of Case~1.

\begin{figure}[ht]
    \centering
    \includegraphics[height=5\baselineskip]{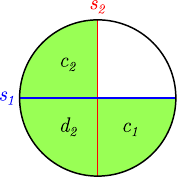}
    \caption{The situation in Case~1}
    \label{fig:Case1}
\end{figure}

\textbf{Case~1:} In the first case, precisely three corners of $s_1,s_2$ are green.
Then two green corners $c_2,d_2$ lie on the same side of $s_2$, so at least one of them is contained in~$\eps_2$ by~\ref{Entangle1}, say $c_2\in \eps_2$.
Hence $c_2$ has order exactly~$\vert s_2\vert$.
So the corner $c_1$ opposite of $c_2$ has order at most~$\vert s_1\vert$ by submodularity; in particular, $c_1$ is green.
Note that $c_1$ and~$d_2$ lie on the same side of~$s_1$.
The corner opposite of~$d_2$ is not green, so has order more than~$\vert s_2\vert$.
Hence $d_2$ has order less than~$\vert s_1\vert$ by submodularity.
So by \ref{Entangle1}, at least one of $d_2$ and~$c_1$ is contained in~$\eps_1$ and has order equal to~$\vert s_1\vert$.
This can only be~$c_1$.
So $c_1,c_2$ are opposite corners of $s_1,s_2$ with $c_1\in \eps_1$ and $c_2\in \eps_2$, giving~\ref{GstandardArg1}.

\textbf{Case~2:} In the second case, all four corners are green.
Applying \ref{Entangle1} on both sides of~$s_2\in \eps_2$, we find corners $c_2,c_2'$ of $s_1,s_2$ with $c_2,c_2'\in \eps_2$ such that $c_2,c_2'$ do not lie on the same side of~$s_2$.
Moreover, $c_2$ and $c_2'$ have order exactly~$\vert s_2\vert$ by~\ref{Entangle1}.
We consider two subcases.

\textbf{Subcase 2A:} In the first subcase, the two corners $c_2,c_2'$ are adjacent, so they lie on the same side of~$s_1$.
Let $c_1$ be the corner opposite of~$c_2$, and let $c_1'$ be the corner opposite of~$c_2'$.
The corners $c_1,c_1'$ have order at most~$\vert s_1\vert$ by submodularity.
Moreover, $c_1$~and~$c_1'$ lie on the same side of~$s_1$.
Hence at least one of $c_1$ and~$c_1'$ is contained in~$\eps_1$ by~\ref{Entangle1}, and we already know that its opposite corner is contained in~$\eps_2$, giving~\ref{GstandardArg1}.

\textbf{Subcase 2B:} In the second subcase, the two corners $c_2,c_2'$ are opposite.
Since $c_2$ and $c_2'$ have order~$\vert s_2\vert$, submodularity with $\vert s_1\vert\le\vert s_2\vert$ implies \mbox{$\vert s_1\vert=\vert s_2\vert$}.
Therefore, by symmetry we can repeat the entire argumentation up to this point with the roles of $s_1$~and~$s_2$ interchanged to find two opposite corners $c_1,c_1'$ of $s_1,s_2$ with $c_1,c_1'\in \eps_1$.
If the sets $\{c_1,c_1'\}$ and $\{c_2,c_2'\}$ intersect, then they are equal, so $c_2$ and $c_2'$ are opposite corners of $s_1,s_2$ with $c_2\in \eps_1$ and $c_2'\in \eps_2$, giving~\ref{GstandardArg1}.
Otherwise, $\{c_1,c_1'\}$ and $\{c_2,c_2'\}$ are disjoint, and then $c_1,c_1'$ are two opposite corners of $s_1,s_2$ in~$\eps_1$ while $c_2,c_2'$ are the other two opposite corners of $s_1,s_2$ and are in~$\eps_2$, giving~\ref{GstandardArg2}.
\end{proof}

\begin{proof}[Proof of \autoref{Main}]
Let $G$ be a finite graph, and let $N$ denote its set of friendly separations.
The set $N$ is nested by \autoref{GEntanglementDisplay}, and it efficiently distinguishes all the tangles in~$G$ by \autoref{TanglesInduceEntanglements}.
As is well-known~\cite[(9.1)]{GMX}, $N$ defines a tree-decomposition $\Tcal$ of~$G$, which efficiently distinguishes all the tangles in~$G$ since $N$ does.
\end{proof}

\begin{rem}
To construct the tree-decomposition $\cT$ that efficiently distinguishes all the tangles in the proof of \autoref{Main}, we have used all entanglements in~$G$ (to first define $N$ and then~$\Tcal$), not just the ones induced by the pairs of distinguishable tangles.
It is possible to adjust the entire framework of this section to only work with the set $\Ecal$ of tangle-induced entanglements instead, to obtain a nested set~$N'=N'(\Ecal)$, which may be incomparable with~$N$ (as set), and then obtain a tree-decomposition from~$N'$; we do this in more detail in \autoref{TechnicalMain} (because there we must restrict to a subset of all the entanglements).
This would make sure that every separation in $N'$ (and hence of the tree-decomposition) efficiently distinguishes two tangles in~$G$.
However, there is an alternative way to achieve this: we can consider the subset $N''\se N$ formed by the separations that efficiently distinguish some two tangles, and then consider the tree-decomposition defined by~$N''$.
\end{rem}

\section{Entanglements in locally-finite infinite graphs}\label{sec:Infinite}

Recall that a graph is \emph{locally finite} if each of its vertices has only finitely many neighbours.
In this section, we extend \autoref{GEntanglementDisplay} to locally-finite infinite graphs.
The proof of \autoref{GEntanglementDisplay} almost works for locally-finite infinite graphs.
The only places where we use finiteness are where we use the crossing numbers~$x(s)$; indeed, we only need that all relevant crossing numbers are finite.
To ensure this, we combine local finiteness with two other customary conditions, \emph{tightness} and \emph{finite boundedness}; see \autoref{finiteKcrossingNumber}.
Then we extend \autoref{GEntanglementDisplay} to infinite graphs under the combination of the three conditions.
The combination of the three conditions is mild in the sense that the extension result, \autoref{TechnicalMain}, is strong enough for its application in~\cite{GraphDec}.

A separation $\{A,B\}$ of a graph $G$ is \emph{tight} if there are components $C_A$ and $C_B$ of $G-(A\cap B)$ with $C_A\se G[A\sm B]$ and $C_B\se G[B\sm A]$ such that $N_G(C_A)=A\cap B=N_G(C_B)$.
An entanglement in a graph is \emph{tight} if it consists of tight separations.
For instance, entanglements induced by pairs of tangles are tight \cite[Lemma~6.1]{InfiniteSplinters}.

\begin{lem}\label{finiteKcrossingNumber}
Let $G$ be any locally finite connected graph and~$k\in\Nbb$.
Then every tight finite-order separation of $G$ is crossed by only finitely many tight separations of~$G$ of order at most~$k$.
\end{lem}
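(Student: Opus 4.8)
The plan is to bound the number of crossing tight separations in two stages: reduce the problem to bounding their \emph{separators}, and then pin those separators down using tightness together with local finiteness.

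First I would reduce to separators. Fix any finite set $T\se V(G)$. Since $G$ is connected and $T$ is finite, every component of $G-T$ sends an edge to $T$; as $G$ is locally finite, $T$ has only finitely many neighbours, so $G-T$ has only finitely many components. A separation with separator exactly $T$ is obtained by distributing these finitely many components among its two sides, so only finitely many separations $\{C,D\}$ satisfy $C\cap D=T$. Hence it suffices to show that only finitely many sets $T$ occur as the separator of a tight separation of order at most $k$ that crosses $\{A,B\}$.

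The key structural input is the following consequence of tightness; write $S:=A\cap B$. If the separator $T=C\cap D$ is contained in $A\sm B$ or in $B\sm A$, then $\{C,D\}$ is nested with $\{A,B\}$. I would prove this using the witness component $C_B\se B\sm A$ of $\{A,B\}$ with $N_G(C_B)=S$: if, say, $T\se A\sm B$, then the connected set $C_B\cup S$ avoids $T$ and so lies on a single side of $\{C,D\}$; since every component of $G-S$ inside $B\sm A$ is adjacent to $S$ by connectivity and avoids $T$, the whole of $B$ is pulled onto that side, which forces an inclusion exhibiting nestedness. Consequently, a crossing $\{C,D\}$ has $T\cap S\neq\emptyset$, or else $T$ meets both $A\sm B$ and $B\sm A$. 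In the latter case, choosing a witness $K\se C\sm D$ of $\{C,D\}$ with $N_G(K)=T$ and joining a neighbour in $A\sm B$ to a neighbour in $B\sm A$ through the connected set $K$ produces a path between the two sides of $\{A,B\}$; as there are no edges between $A\sm B$ and $B\sm A$, this path meets $S$, so $K\cap S\neq\emptyset$. In either case the separation $\{C,D\}$ is anchored at the finite set $S$.

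The final, and I expect hardest, step is to turn this anchoring into finiteness. Here tightness of $\{C,D\}$ must be used in full: both witnesses $K\se C\sm D$ and $L\se D\sm C$ satisfy $N_G(K)=N_G(L)=T$, so every vertex of $T$ has a neighbour in each of the connected sets $K$ and $L$, both of which reach $S$. I would combine this with local finiteness and with the crossing hypothesis---via the four corners of $\{A,B\}$ and $\{C,D\}$, whose separators all lie in $S\cup T$ and have order at most $|S|+k$---to confine the admissible separators $T$ to a finite set depending only on $G$, $\{A,B\}$ and $k$; finiteness then follows from the first step. The main obstacle is precisely this confinement: mere separation of two vertices of $S$ by a bounded-order tight separation does not suffice (such separations can be infinite in number but are pairwise nested, hence do not cross $\{A,B\}$), so one must genuinely exploit the crossing condition, and the witness components $K,L$ may be infinite and approach $T$ from far away. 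I would handle this by controlling how the bounded separators $T$, anchored at $S$ through $K$ and $L$, can split the two sides of $\{A,B\}$, using submodularity of the corner orders to keep everything within the finite neighbourhood structure of $S$ that local finiteness provides.
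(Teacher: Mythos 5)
Your first two steps are sound, and for what it is worth the paper itself does not prove this lemma at all: it declares it well known and points to the proof of Proposition~6.2 of~\cite{InfiniteSplinters}, so your write-up is an independent attempt rather than a variant of an argument given in the paper. The reduction to counting separators is correct (for finite $T$, connectivity and local finiteness give finitely many components of $G-T$, hence finitely many separations with separator $T$), and so is the anchoring argument: a tight $\{C,D\}$ crossing $\{A,B\}$ must have $T\cap S\neq\emptyset$, or else $T$ meets both $A\sm B$ and $B\sm A$ and then both witness components $K,L$ of $\{C,D\}$ meet $S$.

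However, there is a genuine gap, and you have located it yourself: the final step, confining the admissible separators $T$ to a finite set, is the entire substance of the lemma, and your proposal for it is a plan rather than an argument. What you actually know after the anchoring is that either $T$ meets $S$ or $T$ is the common neighbourhood of two distinct components of $G-T$ that each meet $S$, i.e.\ $T$ is a minimal separator of two vertices of $S$; your own observation about infinitely many pairwise nested anchored separations shows this information alone does not yield finiteness. Neither of the two tools you name closes the gap. Submodularity controls the \emph{orders} of the corners of $\{A,B\}$ and $\{C,D\}$, but the issue is the \emph{location} of $T$, and bounding corner orders says nothing about where $T$ sits; it is location that must be pinned down before local finiteness can be invoked. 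And ``the finite neighbourhood structure of $S$'' does not by itself confine $T$: a vertex $t\in T$ is only guaranteed a neighbour in each of $K$ and $L$, and these connected sets may reach $S$ only after arbitrarily many steps, so $t$ can a priori lie at any distance from $S$. What is still needed is a genuinely new argument showing that a locally finite connected graph has, for each pair $s,s'\in S$ and each $k$, only finitely many sets $T$ with $\vert T\vert\le k$ that are the neighbourhood of two distinct components of $G-T$ separating $s$ from $s'$ (together with the analogous statement for $T\cap S\neq\emptyset$); this is essentially the content of the cited Proposition~6.2, and it is exactly the step your proposal leaves open.
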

\begin{proof}
This fact is well-known; see e.g.\ the proof of \cite[Proposition~6.2]{InfiniteSplinters}.
\end{proof}

A set $\Ecal$ of entanglements is \emph{finitely bounded} if there is $k\in\Nbb$ with $\vert s\vert \le k$ for all~$s\in\bigcup\Ecal$.
Let $G$ be any graph, and let $\Ecal$ be a set of entanglements in~$G$. 
Suppose that $\Ecal$ is finitely bounded.
If $G$ is locally finite but infinite, we additionally assume that all entanglements in~$\Ecal$ are tight, so that each separation in~$\bigcup\Ecal$ crosses only finitely many separations in~$\bigcup\Ecal$ by \autoref{finiteKcrossingNumber}.
A separation $\{A,B\}$ in an entanglement $\eps\in\Ecal$ is \emph{$\Ecal$-\friendly } if no other separation in~$\eps$ crosses less separations in~$\bigcup\Ecal$.

\begin{thm}\label{TechnicalMain}
Let $G$ be any locally-finite connected graph and let $\Ecal$ be any finitely bounded set of tight entanglements in~$G$.
Then the set of $\Ecal$-\friendly\ separations of~$G$ is nested.
\end{thm}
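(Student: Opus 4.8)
The plan is to transcribe the proof of \autoref{GEntanglementDisplay} into the present setting, the only change being that all crossings are now counted relative to $\Ecal$ rather than relative to the family of all entanglements of~$G$. Concretely, for a separation $s$ of $G$ I would let $x(s)$ denote the number of separations in $\bigcup\Ecal$ that are crossed by~$s$; with this reading, a separation in an entanglement $\eps\in\Ecal$ is $\Ecal$-\friendly\ exactly when it minimises $x$ over~$\eps$, just as in the finite case. The combinatorial heart of the argument is then unaffected: the statement of \autoref{atLeastThreeGreen} and the case analysis of \autoref{GEntanglementDisplay} invoke only submodularity and axiom~\ref{Entangle1}, and these transfer verbatim. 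Likewise the reductions of \autoref{GsimplyResolvable} and \autoref{GstandardArg}, reread with $\Ecal$ in place of the family of all entanglements, go through word for word, as does the corner-counting step of \autoref{CornerCounting}, whose ingredient \ref{F1}--\ref{F3} of \autoref{AllFish} is already stated for arbitrary graphs.

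The one point at which the finite proof genuinely used finiteness of $G$ is that \autoref{CornerCounting} needs $x(r)$ and $x(s)$ to be finite in order to upgrade the inequality $x(c)+x(d)\le x(r)+x(s)$ to a strict one. So the theorem reduces to showing that every crossing number occurring in the argument is finite, and this is the step I expect to be the main obstacle. Here the three hypotheses enter together. Since $\Ecal$ consists of tight entanglements, every separation in $\bigcup\Ecal$ is tight; since $\Ecal$ is finitely bounded, there is a fixed $k$ with $|s|\le k$ for all $s\in\bigcup\Ecal$; and since $G$ is locally finite and connected, \autoref{finiteKcrossingNumber} then shows that each such $s$ is crossed by only finitely many tight separations of order at most~$k$, hence by only finitely many separations in $\bigcup\Ecal$. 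This gives $x(s)<\infty$ for all $s\in\bigcup\Ecal$.

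It remains only to bound the crossing numbers of corners, and these come for free: if $r,s\in\bigcup\Ecal$ cross and $c,d$ are opposite corners of $r,s$, then combining \ref{F1}--\ref{F3} of \autoref{AllFish} as in \autoref{CornerCounting} yields $x(c),x(d)\le x(r)+x(s)<\infty$. With all relevant crossing numbers finite, the $\Ecal$-versions of \autoref{CornerCounting}, \autoref{GstandardArg} and \autoref{GsimplyResolvable} hold with their original proofs, the case analysis of \autoref{GEntanglementDisplay} produces condition \ref{GstandardArg1} or~\ref{GstandardArg2} for any two crossing $s_1\in\eps_1$ and $s_2\in\eps_2$ with $\eps_1,\eps_2\in\Ecal$, and we conclude that the set of $\Ecal$-\friendly\ separations of $G$ is nested.
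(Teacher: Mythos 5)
Your proposal is correct and follows essentially the same route as the paper's own proof: redefine the crossing number $x(s)$ to count only separations in $\bigcup\Ecal$, derive its finiteness from local finiteness, tightness and finite boundedness via \autoref{finiteKcrossingNumber}, and then rerun \autoref{GsimplyResolvable}, \autoref{CornerCounting}, \autoref{GstandardArg} and the case analysis of \autoref{GEntanglementDisplay} verbatim. Your extra observation that the corners' crossing numbers are automatically finite by \ref{F1}--\ref{F3} is a correct (and welcome) explicit justification of a point the paper leaves implicit.
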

\begin{proof}
The plan is to walk through \autoref{sec:OptimalNested} once more and see that everything adjusts to and works in the setting of the theorem.
First, we adjust the crossing numbers: $x(s)$ counts only the separations in entanglements in $\Ecal$ that cross~$s$.
Then $x(s)$ is finite for all $s\in\bigcup\Ecal$, by \autoref{finiteKcrossingNumber}.

In \autoref{GsimplyResolvable}, we only consider entanglements in~$\Ecal$, and use that the crossing-numbers $x(s_i)$ are finite.
\autoref{AllFish} is stated and proved for arbitrary graphs.
In \autoref{CornerCounting}, we only consider entanglements in~$\Ecal$, so $x(r)$ and $x(s)$ are finite; then the proof extends.
\autoref{GstandardArg} extends similarly, and so does the proof of \autoref{GEntanglementDisplay}.
\end{proof}

Recall that every end of a graph induces a tangle of infinite order; in particular, every pair of ends induces an entanglement.
Two ends of a graph are \emph{$({<}\,k)$-distinguishable} (for $k\in\Nbb$) if their induced tangles are distinguished by a separation of order less than~$k$.

\begin{cor}
Let $G$ be any locally-finite connected graph and $k\in\Nbb$.
Let $\Ecal$ be the set of all entanglements in~$G$ that are induced by pairs of $({<}\,k)$-distinguishable ends of~$G$.
Then the set of $\Ecal$-friendly separations of~$G$ is nested and efficiently distinguishes every pair of $({<}\,k)$-distinguishable ends of~$G$.\qed
\end{cor}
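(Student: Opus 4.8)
The plan is to apply \autoref{TechnicalMain} to the specified set $\Ecal$, after verifying that $\Ecal$ meets its hypotheses, and then to argue the distinguishing property separately. First I would record the standing assumptions: $G$ is locally finite and connected, and $k\in\Nbb$ is fixed. The set $\Ecal$ consists of the entanglements induced by pairs of $({<}\,k)$-distinguishable ends; by \autoref{TanglesInduceEntanglements} each such pair of (end-induced) tangles does induce an entanglement, consisting of the separations efficiently distinguishing the two end-tangles, so $\Ecal$ is well defined and each member is a genuine entanglement in~$G$.

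Next I would check the two structural hypotheses of \autoref{TechnicalMain}. For \emph{finite boundedness}, observe that if two ends are $({<}\,k)$-distinguishable then they are distinguished by a separation of order less than~$k$, so the efficient distinguishers of that pair also have order less than~$k$; hence every separation in $\bigcup\Ecal$ has order at most~$k-1$, and $\Ecal$ is finitely bounded. For \emph{tightness}, I would cite the remark already recorded in the excerpt that entanglements induced by pairs of tangles are tight (by \cite[Lemma~6.1]{InfiniteSplinters}); since end-induced tangles are tangles, every entanglement in~$\Ecal$ is tight. With both hypotheses in hand, \autoref{TechnicalMain} applies directly and yields that the set of $\Ecal$-\friendly\ separations of~$G$ is nested, which is the first assertion.

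For the second assertion I would show that the $\Ecal$-\friendly\ separations efficiently distinguish every pair of $({<}\,k)$-distinguishable ends. Fix such a pair of ends, with induced tangles $\tau,\tau'$, and let $\eps\in\Ecal$ be the entanglement they induce, namely the set of separations efficiently distinguishing $\tau$ and~$\tau'$. By definition, some separation in~$\eps$ achieves the minimum value of the crossing number $x(\cdot)$ over~$\eps$ (the minimum is attained because $x$ takes values in~$\Nbb$ and $\eps$ is non-empty), and every such minimiser is $\Ecal$-\friendly. Any minimiser lies in~$\eps$ and therefore efficiently distinguishes $\tau$ and~$\tau'$; hence the pair of ends is efficiently distinguished by an $\Ecal$-\friendly\ separation, as required.

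The main obstacle I anticipate is not in the nestedness part, which is essentially a citation of \autoref{TechnicalMain} once the hypotheses are confirmed, but in being careful about the two points that make $\Ecal$ admissible: that ends really do induce tangles of infinite order whose distinguishers of bounded order form an entanglement (so that \autoref{TanglesInduceEntanglements} and the tightness citation genuinely apply to the end-tangle setting), and that finite boundedness follows from $({<}\,k)$-distinguishability rather than from some finiteness of~$G$. Once these are pinned down, the argument is a routine assembly of the earlier results.
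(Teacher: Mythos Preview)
Your proposal is correct and matches the paper's approach: the corollary is marked with a bare \qed\ because it is immediate from \autoref{TechnicalMain} once one checks that $\Ecal$ is finitely bounded and tight, exactly as you do, and the distinguishing claim follows since every entanglement in~$\Ecal$ contains an $\Ecal$-friendly separation by definition. The only point you might make more explicit is that the minimum of $x(\cdot)$ over~$\eps$ is attained because $x(s)$ is \emph{finite} for every $s\in\bigcup\Ecal$, which is precisely what \autoref{finiteKcrossingNumber} guarantees under the tightness and bounded-order hypotheses you have already verified.
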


R\"uhmann showed a result that is somewhat similar to the above corollary, see~\cite[Theorem 6.1.6]{Tim}.
For more on infinite trees of tangles, we refer to~\cite{CanonicalTreesofTDs,carmesinhalinconj,InfiniteSplinters,ToTinfOrder,jacobs2023efficiently}.

\section{Abstract entanglements}\label{sec:AbstractEntangle}

In this section, we introduce an abstract setting which is more general than separations of graphs, and generalise \autoref{Main} to this abstract setting.

A \emph{separation} is a set of the form~$\{A,B\}$ with~$A\neq B$.
We refer to $A$ and $B$ as the \emph{(opposite) sides} of~$\{A,B\}$.
An \emph{uncrossing-setting} on a set~$S$ of separations is a pair $(S,{\sim})$ where $\sim$ is an anti-reflexive symmetric binary-relation on~$S$.
Instead of writing $r\sim s$ we say that $r$ and~$s$ \emph{cross}, and any two elements of~$S$ that do not cross are \emph{nested}.
A set of separations in~$S$ is \emph{nested} if its elements are pairwise nested.

A \emph{corner-map} for an uncrossing-setting $(S,{\sim})$ is a map $\boxplus$ which assigns to every unordered pair of crossing separations $r=\{A,B\}$ and $s=\{C,D\}$ four pairwise distinct separations $L_{\{r,s\}}(\{X,Y\})$, one for each choice of sides $X\in\{A,B\}$ and $Y\in\{C,D\}$, subject to condition \ref{BetterCorner} below.
We allow any number of these corners to be elements of~$S$, but we do not require them to be elements of~$S$.
A corner of $r,s$ that is contained in~$S$ shall be called an \emph{$S$-corner} for emphasis.
As $r$ and $s$ will always be clear from context, we reduce the notation $L_{\{r,s\}}(\{X,Y\})$ to $L(X,Y)$ for convenience.

\begin{eg}
If two separations $\{A,B\}$ and $\{C,D\}$ of a graph~$G$ cross, then the four corners are the usual corners $L(X,Y):=\{X\cap Y,X'\cup Y'\}$ for $\{X,X'\}=\{A,B\}$ and $\{Y,Y'\}=\{C,D\}$.
\end{eg}

Two distinct corners $L(X,Y)$ and $L(X',Y')$ are \emph{opposite} if $X,X'$ are opposite sides of~$\{A,B\}$ and $Y,Y'$ are opposite sides of~$\{C,D\}$.
They are \emph{adjacent} if they are not opposite, which is equivalent to having $X=X'$ or~$Y=Y'$ but not both.
They \emph{lie on the same side} of $\{A,B\}$ if $X=X'$, and similarly they \emph{lie on the same side} of $\{C,D\}$ if~$Y=Y'$.
Note that distinct corners that lie on the same side of~$r$ or of~$s$ are adjacent.
Condition~\ref{BetterCorner} generalises \autoref{CornerCounting} and reads as follows:

\begin{enumerate}[label={\textnormal{(F)}}]
    \item\label{BetterCorner} Every two opposite $S$-corners $c,d$ of~$r,s$ satisfy the following three conditions.
    \begin{enumerate}[label={\textnormal{(F\arabic*})}]
    	 \item\label{F1S} If $t\in S$ crosses at least one of $c$ and~$d$, then $t$ crosses at least one of $r$ and~$s$.
    	 \item\label{F2S} If $t\in S$ crosses both $c$ and~$d$, then $t$ crosses both $r$ and~$s$.
    	 \item\label{F3S} Neither $r$ nor $s$ crosses $c$ or~$d$.
    \end{enumerate}
\end{enumerate}

An \emph{order-function} is a map \[\vert\cdot\vert\colon S\cup\{\text{corners of crossing separations in }S\}\to\Rbb_{\ge 0}.\]
Then $\vert s\vert$ is the \emph{order} of~$s$.
An order-function $\vert\cdot\vert$ is \emph{submodular} if for every two crossing elements $r,s\in S$ and opposite corners $c,d$ of~$r,s$ it satifies $\vert c\vert+\vert d\vert\le \vert r\vert+\vert s\vert$.
A \emph{submodular uncrossing-setting} on a set~$S$ of separations is a triple $(S,{\sim},\boxplus,\vert\cdot\vert)$ formed by an uncrossing-setting $(S,{\sim})$ with a corner-map~$\boxplus$ and a submodular order-function~$\vert\cdot\vert$.

An \emph{entanglement} in a submodular uncrossing-setting on a set~$S$ of separations is a non-empty subset $\eps\se S$ which exhibits the following property:
\begin{enumerate}[label=($\Ecal$)]
    \item If a separation $r\in \eps$ is crossed by an~$s\in S$ so that two adjacent corners on the same side of~$r$ have order at most~$\vert r\vert$, then at least one of these two corners has order equal to~$\vert r\vert$ and lies in~$\eps$.
\end{enumerate}

Suppose now that $S$ is finite.
For every $s\in S$ we denote by $x(s)$ the number of separations in entanglements which are crossed by~$s$, and we call $x(s)$ the \emph{crossing-number} of~$s$.

\begin{lem}\label{ScrossingSubmod}
Let $r,s$ be two crossing separations in entanglements in a submodular uncrossing-setting on a set~$S$ of separations.
Then for every two opposite $S$-corners $c,d$ of $r,s$ we have $x(c)+x(d)<x(r)+x(s)$.
\end{lem}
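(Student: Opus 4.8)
The plan is to transport the proof of \autoref{CornerCounting} into the abstract setting, replacing the three appeals to \autoref{AllFish} by the axioms \ref{F1S}--\ref{F3S} that are bundled into condition \ref{BetterCorner}. Throughout I would use two bookkeeping facts: first, every separation lying in an entanglement is by definition a member of~$S$, so that \ref{F1S} and \ref{F2S} may legitimately be applied to it; and second, since $r,s\in S$ cross, the two opposite corners $c,d$ are genuine $S$-corners of $r,s$, so \ref{BetterCorner} applies to the pair $c,d$.

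First I would establish the weak inequality $x(c)+x(d)\le x(r)+x(s)$ by comparing the two sides contribution-by-contribution. Fix any separation $t$ lying in some entanglement; then $t\in S$. I would bound the contribution of $t$ to the left-hand count against its contribution to the right-hand count by distinguishing three cases according to how many of $c,d$ are crossed by~$t$. If $t$ crosses neither of $c,d$, its contribution to the left is~$0$ and there is nothing to prove. If $t$ crosses exactly one of $c,d$, then its contribution to the left is~$1$, while \ref{F1S} guarantees that $t$ crosses at least one of $r,s$, so its contribution to the right is at least~$1$. If $t$ crosses both $c$ and~$d$, then its contribution to the left is~$2$, while \ref{F2S} guarantees that $t$ crosses both $r$ and~$s$, so its contribution to the right is exactly~$2$. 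In each case the per-$t$ contribution to the left is at most that to the right, and summing over all separations $t$ in entanglements yields $x(c)+x(d)\le x(r)+x(s)$.

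Finally I would upgrade this to a strict inequality by exhibiting a strict gap at two specific separations. Since $r$ and~$s$ themselves lie in entanglements and cross one another, $r$ contributes to~$x(s)$ and $s$ contributes to~$x(r)$, so each of $r,s$ adds to the right-hand count. On the other hand, \ref{F3S} asserts that neither $r$ nor~$s$ crosses $c$ or~$d$, so neither adds anything to~$x(c)$ or~$x(d)$; thus the per-$t$ inequalities for $t=r$ and for $t=s$ are strict, forcing the total inequality to be strict as well. The only point requiring care is the consistent applicability of the axioms --- namely that each counted $t$ lies in~$S$ and that $c,d$ are $S$-corners --- but since entanglements are subsets of~$S$ this is immediate, so I expect no real obstacle here.
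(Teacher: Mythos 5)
Your proof is correct and follows exactly the route the paper intends: the weak inequality by per-separation double counting via \ref{F1S} and \ref{F2S}, then strictness from \ref{F3S} together with the fact that $r$ and $s$ lie in entanglements and cross each other. The paper's own proof is a one-line reference back to \autoref{CornerCounting}, and your argument is just that proof written out in full, with the correct bookkeeping that entanglements are subsets of~$S$.
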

\begin{proof}
This follows from \ref{F1S}--\ref{F3S} just like in the proof of \autoref{CornerCounting}.
\end{proof}

A separation $s\in S$ is \emph{\friendly } if it occurs in an entanglement~$\eps$ and no other separation in~$\eps$ crosses less separations in entanglements.

\begin{thm}\label{AbstractEntanglementDisplay}
The \friendly\ separations in a finite submodular uncrossing-setting are nested.
\end{thm}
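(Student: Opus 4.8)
The plan is to replay the proof of \autoref{GEntanglementDisplay} verbatim in the present setting, since every step taken in \autoref{sec:OptimalNested} used only three ingredients, each of which is now available axiomatically: the strict corner-counting inequality, submodularity of the order-function, and the entanglement axiom~$(\Ecal)$. Concretely, I would first transfer the two structural lemmas -- the analogues of \autoref{GsimplyResolvable} and \autoref{GstandardArg} -- and then re-run the green-colouring case analysis that forms the core of \autoref{GEntanglementDisplay}, with \autoref{ScrossingSubmod} taking over the role played by \autoref{CornerCounting}.

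First I would record the abstract analogue of \autoref{GsimplyResolvable}: if for all entanglements $\eps_1,\eps_2$ and all crossing $s_1\in\eps_1$, $s_2\in\eps_2$ there is an index~$i$ and a separation $c\in\eps_i$ with $x(c)<x(s_i)$, then the \friendly\ separations are nested. This is purely formal: being \friendly\ means attaining the minimum crossing-number inside one's own entanglement, so such a $c$ witnesses that $s_i$ is not \friendly; hence no two \friendly\ separations cross. Next I would transfer \autoref{GstandardArg}: whenever \ref{GstandardArg1} or \ref{GstandardArg2} holds, the premise of the resolvability lemma is met. The only external input here is the strict inequality $x(c)+x(d)<x(s_1)+x(s_2)$ for opposite $S$-corners, which previously came from \autoref{CornerCounting} but is now supplied directly by \autoref{ScrossingSubmod}; the surrounding logic (an indirect argument under~\ref{GstandardArg1}, and picking the corner on the heavier of $s_1,s_2$ under~\ref{GstandardArg2}) is combinatorial and transfers unchanged.

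It then remains to verify the premise of the transferred \autoref{GstandardArg}, which is exactly the content of the proof of \autoref{GEntanglementDisplay}. Assuming $\vert s_1\vert\le\vert s_2\vert$, I would colour a corner of $s_1,s_2$ green when its order is at most~$\vert s_2\vert$, and re-prove \autoref{atLeastThreeGreen}: submodularity forces at least one corner of each opposite pair to be green, and if exactly two corners were green then they would be adjacent, the other two would have order exceeding~$\vert s_2\vert$, and submodularity would push the green ones below order~$\vert s_1\vert$ -- whereupon $(\Ecal)$, applied on their common side, would place a green corner of order $\vert s_1\vert$ or $\vert s_2\vert$ into $\eps_1$ or~$\eps_2$, a contradiction. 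The subsequent case split -- precisely three green corners, versus all four green with the two forced $\eps_2$-corners adjacent or opposite -- invokes nothing beyond submodularity, the corner relations \emph{opposite}, \emph{adjacent} and \emph{same side}, and repeated use of~$(\Ecal)$, all now definitional; in the opposite subcase the equality $\vert s_1\vert=\vert s_2\vert$ follows from submodularity and licenses the symmetric repetition with $s_1,s_2$ interchanged.

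The one thing genuinely worth checking -- and the only potential obstacle -- is that the finite-graph proof of \autoref{GEntanglementDisplay} never smuggled in a graph-specific fact, such as connectivity of the sides or a separator spanning a clique. A scan confirms that its sole non-formal inputs are submodularity, the axiom~$(\Ecal)$, and the combinatorics of corners; the single place where graph structure was genuinely needed, namely the crossing-count comparison of \autoref{CornerCounting}, has already been isolated and re-established abstractly in \autoref{ScrossingSubmod} from the corner-map axioms \ref{F1S}--\ref{F3S}. Thus the transfer is faithful, and the \friendly\ separations in a finite submodular uncrossing-setting are nested.
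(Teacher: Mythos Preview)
Your proposal is correct and matches the paper's own proof essentially verbatim: the paper simply states that the argument is analogous to that of \autoref{GEntanglementDisplay}, including the transferred versions of \autoref{GsimplyResolvable} and \autoref{GstandardArg}, with \autoref{ScrossingSubmod} replacing \autoref{CornerCounting} as the sole substitution. Your additional observation that every corner produced by~$(\Ecal)$ automatically lies in~$S$ (so that \autoref{ScrossingSubmod} applies to it) is the one point worth making explicit, and you have implicitly handled it.
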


\begin{proof}
The proof is analogous to the proof of \autoref{GEntanglementDisplay}, including \autoref{GsimplyResolvable} and \autoref{GstandardArg}, with just one exception:
instead of \autoref{CornerCounting}, we use~\autoref{ScrossingSubmod}.
\end{proof}

\autoref{AbstractEntanglementDisplay} clearly implies \autoref{Main}, and it yields the following version of \autoref{Main} for matroids.
We state the theorem using the terminology of \cite[§4.2]{ProfilesNew}.
The usual order-function for matroid-separations is well known to be submodular, see e.g.~\cite{ProfilesNew}. Matroid-separations exhibit \ref{BetterCorner}: indeed, the proof of \autoref{AllFish} extends to matroid-separations verbatim.
Hence matroid-separations form a submodular uncrossing-setting.

\begin{thm}
For every finite matroid~$M$, the set of \friendly\ separations of~$M$ is a nested set of separations; and hence gives rise to a tree-decomposition distinguishing all tangles efficiently.\qed
\end{thm}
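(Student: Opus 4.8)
The plan is to realise matroid-separations as a finite submodular uncrossing-setting and then read off nestedness from \autoref{AbstractEntanglementDisplay}, before transferring the tangle-distinguishing statement from the graph case. Following the terminology of \cite[§4.2]{ProfilesNew}, I would let $S$ be the set of separations of~$M$, declare two separations to cross exactly when they are not nested, and take the corner-map to send a crossing pair $\{A,B\},\{C,D\}$ to the four separations $L(X,Y)=\{X\cap Y,\,X'\cup Y'\}$ as in the graph example. The order-function is the matroid connectivity function; its submodularity is classical (see \cite{ProfilesNew}). Since $M$ is finite, so is~$S$, and it remains only to verify condition~\ref{BetterCorner}.

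For~\ref{BetterCorner} I would appeal to the remark preceding the statement: the proof of \autoref{AllFish} is purely combinatorial, operating on the sides and corners of separations through their defining set-operations and never invoking graph-specific structure. The same argument therefore establishes \ref{F1S}--\ref{F3S} for matroid-separations verbatim, so $(S,{\sim},\boxplus,\vert\cdot\vert)$ is a finite submodular uncrossing-setting. Now \autoref{AbstractEntanglementDisplay} applies directly and shows that the \friendly\ separations of~$M$ form a nested set.

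It remains to handle the tangle-distinguishing claim. Here I would first transfer \autoref{TanglesInduceEntanglements} to matroids: every pair of distinguishable matroid-tangles induces an entanglement, namely the set of separations efficiently distinguishing them. Its proof uses only the tangle axioms---consistency, and the prohibition on three separations whose small sides together cover the ground set---both of which belong to the definition of matroid-tangles, so the argument carries over unchanged. Consequently the nested set of \friendly\ separations contains, for each distinguishable pair of tangles, a separation efficiently distinguishing them, and hence efficiently distinguishes all tangles of~$M$. Passing from this nested set to an actual tree-decomposition of~$M$ then uses the matroid analog of the standard fact \cite[(9.1)]{GMX}.

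The main obstacle I anticipate lies not in the nestedness, which \autoref{AbstractEntanglementDisplay} delivers essentially for free once \ref{BetterCorner} is checked, but in the final passage from a nested set of matroid-separations to a tree-decomposition of~$M$ and in confirming that efficient distinguishability is preserved under this passage. Unlike the uncrossing framework, which is entirely combinatorial, this step engages with matroid-specific structure theory and relies on the matroid counterparts of \autoref{TanglesInduceEntanglements} and of \cite[(9.1)]{GMX}; both are expected to be routine transfers, but they are where genuine matroid content enters.
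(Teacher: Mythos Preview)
Your proposal is correct and matches the paper's approach exactly: the paper's proof consists of the paragraph immediately preceding the theorem, which observes that matroid-separations form a finite submodular uncrossing-setting (submodularity being classical and \ref{BetterCorner} holding because the proof of \autoref{AllFish} extends verbatim), so \autoref{AbstractEntanglementDisplay} applies directly. The tangle-distinguishing and tree-decomposition parts are left implicit in the paper (the proof is just \qed), and your handling of them via the matroid analogues of \autoref{TanglesInduceEntanglements} and \cite[(9.1)]{GMX} is the intended route.
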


\paragraph{\bf Concluding remarks}
In \cite{Local2seps}, $r$-local 2-separations of graphs have been introduced, which need not separate the graph globally but which separate it $r$-locally in that they separate a ball of radius $r/2$ around their separators.
While it is not obvious how the notion of tangles could be generalised to $r$-local separations, this can be achieved for entanglements with a slightly different notion of $r$-local separations, as announced in~\cite{Local2seps}.
We would also like to mention that \autoref{TechnicalMain} and \autoref{AbstractEntanglementDisplay} will be used in upcoming work to find graph-decompositions, see for example~\cite{GraphDec}.

\subsection*{Acknowledgement}

We thank two referees for valuable comments that greatly improved this note.
One comment fixed a critical error in the setup for abstract entanglements, and we are grateful to the referee for spotting and fixing it.
We thank Sandra Albrechtsen for pointing out and fixing an error in \autoref{CornerCounting}.
We are grateful to Raphael W.~Jacobs and Paul Knappe for feedback on a very early~draft.
We thank Nathan Bowler for bringing the work of Rühmann~\cite{Tim} to our attention.

\bibliographystyle{plain}
\bibliography{collective}
\end{document}